\numberwithin{equation}{section}
\newtheorem{conjecture}{Conjecture}
\newtheorem{theorem}{Theorem}[section]
\newtheorem*{theorem*}{Theorem}
\newtheorem{lemma}[theorem]{Lemma}
\newtheorem{claim}[theorem]{Claim}
\newtheorem{proposition}[theorem]{Proposition}
\newtheorem{observation}[theorem]{Observation}
\theoremstyle{definition}{

\newtheorem*{definition*}{Definition}

\newtheorem*{remark*}{Remark}
}
\renewcommand{\P}{\mathbb{P}}
\newcommand{\Bin}{\operatorname{Bin}}
\renewcommand{\epsilon}{\varepsilon}
\renewcommand{\phi}{\varphi}
\newcommand{\cG}{\mathcal{G}}
\newcommand{\comb}{\text{\sffamily\slshape Comb}}
\newcommand{\remove}[1]{}
\def\aas{w.h.p.}
\def\Aas{W.h.p.}
\newcommand{\beq}[1]{\begin{equation}\label{#1}}
\newcommand{\enq}[0]{\end{equation}}
\newcommand{\bn}[0]{\bigskip\noindent}
\newcommand{\mn}[0]{\medskip\noindent}
\newcommand{\nin}[0]{\noindent}
\newcommand{\sub}[0]{\subseteq}
\newcommand{\sm}[0]{\setminus}
\renewcommand{\dots}[0]{,\ldots,}
\newcommand{\ov}[0]{\overline}
\newcommand{\ra}[0]{\rightarrow}
\newcommand{\FF}[0]{{\bf F}}
\newcommand{\rrr}[0]{r}
\newcommand{\DDD}[0]{D}
\newcommand{\0}[0]{\emptyset}
\newcommand{\C}[2]{{{#1}\choose{{#2}}}}
\newcommand{\Cc}[0]{\tbinom}
\newcommand{\ga}[0]{\alpha }
\newcommand{\gb}[0]{\beta }
\newcommand{\gc}[0]{\gamma }
\newcommand{\gD}[0]{\Delta }
\newcommand{\gG}[0]{\Gamma }
\newcommand{\gO}[0]{\Omega}
\newcommand{\gz}[0]{\zeta}
\newcommand{\eps}[0]{\varepsilon }
\newcommand{\vt}[0]{\vartheta}
\newcommand{\sugg}[1]{}
\newcommand{\sugr}[1]{}
\date{}
\begin{document}
\title{The threshold for combs in random graphs}

\author{Jeff Kahn}
\address{Jeff Kahn\hfill\break
Department of Mathematics\\
Rutgers\\
Piscataway, NJ 08854, USA.}
\email{jkahn@math.rutgers.edu}
\urladdr{}
\thanks{J.\ Kahn is supported by NSF grant DMS0701175.}
\urladdr{}

\author{Eyal Lubetzky}
\address{Eyal Lubetzky\hfill\break
Microsoft Research\\
One Microsoft Way\\
Redmond, WA 98052, USA.}
\email{eyal@microsoft.com}
\urladdr{}

\author{Nicholas Wormald}
\address{Nicholas Wormald\hfill\break
School of Mathematical Sciences\\
Monash University\\
Clayton, Victoria 3800, Australia.}
\email{nick.wormald@monash.edu}
\thanks{N.\ Wormald was supported by the Canada Research Chairs Program and NSERC during this research.}
\urladdr{}

\begin{abstract}
For $k\mid n$ let $\comb_{n,k}$ denote the tree consisting
of an $(n/k)$-vertex path with disjoint $k$-vertex paths beginning at each of its vertices.
An old conjecture says that for any $k=k(n)$ the threshold for the random graph $\cG(n,p)$
to contain $\comb_{n,k}$ is at $p\asymp \frac{\log n}n$.
Here we verify this for $k \leq C\log n$ with any fixed $C>0$.
In a companion paper, using very different methods, we treat the complementary range, proving the conjecture for $k\geq \kappa_0 \log n$ (with $\kappa_0\approx 4.82$).
\end{abstract}

\maketitle

\section{Introduction}


Write $G=\cG(n,p)$ for the usual random graph on
$V:=[n]:=\{1\dots n\}$, in which edges are present independently,
each with probability $p$.
We are interested in understanding when (i.e.\ for what $p$) $G$ is
likely to contain (a copy of) a fixed $n$-vertex tree $T$.

(Formally we may define the ``threshold" for containing $T$ to be
that (unique) $p$ for which the probability that $G$ contains $T$ is 1/2.
To stay closer to the usual threshold language of
\cite{ER}, or e.g. \cite{JLR}, we would need to work with a sequence $\{T_n\}$;
but in any case, we will not make much use of the formal definition.)

Specifically we are interested in the following conjecture.
\begin{conjecture}\label{TConj}
For each fixed $\gD$ there is a $C$ such that
if $T$ is any
$n$-vertex tree of maximum degree at most $\gD$,
then $\cG(n,C\frac{\log n}n)$ \aas\ contains $T$.
\end{conjecture}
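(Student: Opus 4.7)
The plan is to combine the absorbing method with a structural dichotomy for bounded-degree trees. Fix $\gD$ and choose $p = C(\log n)/n$ for a large constant $C = C(\gD)$; we aim to show that $G \deq \cG(n,p)$ \aas\ contains every $n$-vertex tree $T$ of maximum degree at most $\gD$.

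First I would establish a structural lemma: for an appropriate $k = k(\gD)$, every such tree $T$ either contains at least $\ga n$ leaves, or contains at least $\ga n/k$ pairwise vertex-disjoint bare paths of length $k$ (paths whose internal vertices have degree $2$ in $T$), with $\ga = \ga(\gD) > 0$. Stripping these flexible pieces off produces a skeleton $T_0$ with $|V(T_0)| \le (1-\ga/2)n$ that will be embedded rigidly, while the leaves or bare paths provide the slack used by the absorbing argument.

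Next, split $G$ into two independent rounds $G = G_1 \cup G_2$ with $G_i = \cG(n,p_i)$ and $p_1+p_2-p_1p_2 = p$. Inside $G_1$ I would construct a small \emph{absorber}: a vertex set $A \sub V$ of size $\eps n$, together with a partial embedding of a piece of $T$ into $A$, having the property that for \emph{every} leftover set $R \sub V \sm A$ of the appropriate size and \emph{every} admissible partial embedding of $T_0$ into $V \sm (A \cup R)$, the embedding can be completed to a full copy of $T$ whose unused image lies in $A \cup R$. Existence of such an $A$ should follow by a first-moment/union-bound argument over random candidates, exploiting the fact that, \aas, $G_1$ has the pseudorandom degree and expansion properties expected at density $\Theta(\log n/n)$. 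Then embed $T_0$ into $V \sm A$ via $G_2$ using a Friedman--Pippenger-style greedy embedding (which only requires each image vertex to have $\gD$ fresh neighbors at each step, and so tolerates the low mean degree), and finish by routing the leftover $R$ through the absorber using either a perfect matching (in the leaf case) or a system of vertex-disjoint paths of length $k$ (in the bare-path case).

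The main obstacle is the tightness of $p = \Theta(\log n/n)$: at this density the minimum degree is only $\Theta(\log n)$, so the closing-up step sits exactly at the Hall threshold, and any slack in $R$ can destroy the argument. This is precisely what forces the short-tooth vs.\ long-tooth dichotomy mentioned in the abstract, and what will likely drive a proof for general bounded-degree trees: the absorber's flexibility must be engineered so that its Hall-type condition is robust against the worst admissible $R$, ideally without worsening $C=C(\gD)$ beyond an absolute constant multiple of $\gD$.
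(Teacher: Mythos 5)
The labelled statement is a \emph{conjecture}; the paper does not prove it, and indeed explicitly remarks that its methods ``do not so far seem to be leading to a proof of Conjecture~\ref{TConj}.'' What the paper actually proves is Theorem~\ref{T1}, the special case where $T$ is a comb $\comb_{n,k}$, and even that requires two entirely separate arguments (small $k$ here, large $k$ in the companion paper \cite{KLW2}). So there is no ``paper's own proof'' of the conjecture to compare you against, and a correct proof of the conjecture would of course be a far stronger result than anything in the paper.

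As for your proposal on its own terms: the overall shape --- a leaves-versus-bare-paths structural dichotomy, two independent rounds, greedy embedding of a skeleton, and an absorber to close up --- is a sensible high-level plan. But as written it is a program, not a proof, and every hard step is left unargued. The decisive gap is the absorber: you assert that a set $A$ with the stated \emph{universal} property (it completes \emph{every} admissible partial embedding against \emph{every} admissible leftover $R$) ``should follow by a first-moment/union-bound argument over random candidates.'' It does not. There are exponentially many pairs (partial embedding, $R$), the relevant events are far from independent, and at $p=\Theta(\log n/n)$ the minimum degree is only $\Theta(\log n)$, so there is essentially no slack with which to union-bound; building an absorber that is robust against all such $R$ is precisely the technical heart of any proof of this conjecture, and nothing in the outline supplies it. Likewise, the claim that a Friedman--Pippenger greedy embedding of the skeleton ``tolerates the low mean degree'' is exactly what needs proof: such arguments are comfortable at $p=n^{-1+\epsilon}$ or with polylogarithmic slack, but at $\Theta(\log n/n)$ the expansion guarantees degrade to the point where the standard lemma does not apply off the shelf. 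Finally, the closing-up step (matching or disjoint-path routing of $R$ through $A$) is acknowledged to sit ``exactly at the Hall threshold'' but no mechanism is offered to make it go. In short, the proposal correctly names the obstacles without overcoming any of them, and it is also not the approach taken in this paper, which for combs with short teeth proceeds by a direct partition-plus-repair algorithm and Hall's theorem (Claim~\ref{claim}) rather than by absorption.
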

\nin
(As usual ``\aas" means with probability tending to 1 as $n\ra \infty$.)

Of course for $p < \frac{\log n}n$
(we use $\log$ for $\ln$), $G$ is
likely to contain isolated vertices, so Conjecture~\ref{TConj} says
that the threshold for containing any bounded degree $T$ is
$\Theta(\frac{\log n}n)$.
This is known when $T$ is a Hamiltonian path
\cites{KS,Boll}, and easy when $T$ has $\gO(n)$ leaves
(see \cites{AKS,Kriv}).
It has also been proved for ``almost all" trees,
even without the maximum degree requirement \cite{BW}.
More recently \cite{HKS}, it has been shown to hold with
$C=1+\eps$ if $T$ has $\gO(n)$ leaves or contains a path of length $\gO(n)$ consisting of vertices of
degree 2.
The best general progress to date is \cite{Kriv},
which proves that $p\geq n^{-1+o(1)}$ suffices for all bounded degree trees,
and also considers larger degrees; see this reference for some further discussion.

Conjecture~\ref{TConj}
was proposed by the first author about twenty years ago
(though stated in print only in \cite{KK}, in which see also
the far more general \cite{KK}*{Conjecture 1}),
but, being a natural guess,
is perhaps best considered folklore.
At that early date it was also suggested that some insight might be
gained by considering the case where,
for some $k\mid n$, $T$ is the tree --- here denoted $\comb_{n,k}$ --- consisting
of an $(n/k)$-vertex path $P$
together with disjoint $k$-vertex paths beginning at the vertices of $P$.
Such trees, which have sometimes been called ``combs,"
may be thought of as lying somewhere between the settled cases of
Conjecture~\ref{TConj} mentioned above.

Though we have not much non-verbal evidence,
this suggestion does seem to have received quite a bit of attention,
but, absent
any serious progress, seems not to have produced anything in print.
Here and in the companion paper~\cite{KLW2} we establish Conjecture~\ref{TConj}
for combs.

\begin{theorem}\label{T1}
There exists some fixed $C$ such that
for every $n$ and $k\mid n$, the random graph $\cG(n,C\frac{\log n}n)$
\aas\ contains a copy of $\comb_{n,k}$.
\end{theorem}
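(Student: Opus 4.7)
By sprinkling I would write $G\sim\cG(n,C\log n/n)$ as $G_1\cup G_2\cup G_3$ with each $G_i\sim\cG(n,\Theta(\log n/n))$, to be used in three successive stages. The natural decomposition of $\comb_{n,k}$ is into a spine path $P$ of $n/k$ vertices together with $n/k$ pendant ``teeth,'' each a path of $k-1$ new vertices rooted at a spine vertex. In the regime $k\le C\log n$ a tooth has length at most the typical degree in $G$, giving enough local room around each spine vertex in principle; the challenge is keeping the teeth vertex-disjoint while simultaneously producing a spine on the chosen $n/k$ roots.

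\textbf{Main embedding.} In $G_1$ I would build the spine and seed the teeth as follows. Rather than trying to isolate a candidate spine set $S$ and find a Hamilton path inside $\cG(S,p_1)$ (which is far below threshold for $|S|=n/k$), I would run Pósa-style rotation and extension on $G_1$ to obtain a very long path, then designate every $k$-th vertex along it as a spine vertex and use the intermediate segments as first-layer tooth vertices. Teeth are then grown outward, layer by layer, using $G_2$: at layer $j$ find a perfect matching between the $n/k$ current tooth-endpoints and the so-far unused vertices, with edges inherited from $G_2$. For the early layers Hall's condition follows from a standard Chernoff + union-bound proof that $\cG(n,p)$ has good bipartite expansion on all sets of size at most, say, $n/2$.

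\textbf{Absorption and main obstacle.} Before running the bulk embedding I would set aside, inside $G_1$, a small absorbing sub-comb $H_0$ on a vertex set $A$ of size $O(n/\log n)$, built via a Friedman--Pippenger-type tree embedding, together with a ``reservoir'' $R$ of vertices explicitly dedicated to the final few tooth-layers. After the bulk embedding, use $G_3$ together with the absorbing property of $H_0$ to swallow any leftover vertices into the final comb. The hard part is the \emph{endgame} of the tooth construction: in the last few layers the unused pool has shrunk to $O(n/\log n)$ vertices and each tooth-endpoint has only $O(1)$ remaining neighbors into it, so the Hall-based matching is tight and a naive argument fails. Resolving this seems to require (i) the reservoir $R$ to be calibrated so that Hall's condition is preserved throughout the difficult layers, (ii) a careful ``at-risk'' accounting of those tooth-endpoints whose remaining neighborhoods are smallest, and (iii) the absorber $H_0$ to be flexible enough to re-route around any stubborn leftover set. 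Interleaving this with the spine construction -- which competes with the teeth for the same scarce edges -- is the delicate coordination that I expect to be the main technical hurdle.
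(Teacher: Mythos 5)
Your high-level picture — sprinkle into three graphs and grow the $n/k$ teeth \emph{layer by layer} as a sequence of perfect matchings — is the same starting observation the paper uses. But the execution diverges, and two issues mean the proposal as written does not go through.

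First, the spine construction doesn't produce a spine. If you take a long path $v_1v_2\cdots v_\ell$ from rotation--extension and designate every $k$-th vertex as a spine vertex, the designated vertices $v_k, v_{2k}, \ldots$ are not adjacent in $G$, so they do not form a path. The paper sidesteps this entirely: for $k>1$, a separate sprinkle $\cG(n,d/n)$ with a large constant $d$ already contains a path on $m=n/k$ vertices (this is far easier than a spanning path), and these $m$ vertices are then simply given as the roots $v_1,\ldots,v_m$ in the main argument. You do not need Pósa-style machinery here, and trying to harvest both the spine and the first tooth layer from one long path creates a constraint you cannot satisfy.

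Second, the endgame you flag is exactly where the argument breaks, and the proposed fix is not a proof. After $k-2$ greedy matchings into a shrinking pool, the pool has size $\Theta(n/k)=\Theta(n/\log n)$ at $k\approx\log n$, and each tooth-endpoint has only $\Theta(1)$ remaining neighbours there, so Hall's condition fails with constant probability per pair and the absorber/reservoir you sketch would have to repair $\Theta(n/k)$ failures while itself being a comb-like structure whose construction you haven't explained. The paper takes a genuinely different route that avoids the shrinking pool altogether: it fixes an equipartition target $M_0\cup M_1\cup\cdots\cup M_{k-1}$ of $V$ (with $M_0$ the given roots) and shows one can choose the partition so that every vertex of $M_i$ has $\ge T=\Theta(mp)$ neighbours in each adjacent block $M_{i\pm1}$, and then Hall's condition for each bipartite pair $(M_{i-1},M_i)$ is verified directly from these degree lower bounds via a union bound over all $(i,j,a)$. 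The partition is built by (i) a single random split of $R=V\setminus M_0$ into blocks with probability $\alpha/k$ each, (ii) a short ``repair phase'' that uses $G_2$ to patch the few vertices with too few $G_1$-neighbours into a needed block (the crucial observation is that the repair edges form a forest, so they contribute $<2a$ edges to any $a\times a$ bipartite sub-square, which keeps the Hall-violation union bound clean), and (iii) a Hall-theorem argument to distribute the leftover vertices respecting the ``barred'' constraints. Each of these is a routine deviation bound; there is no absorber and no endgame. If you want to salvage the sequential-matching idea you would need to make your absorber precise, and it is not clear that this is any easier than the paper's static partition plus repair.
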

\nin
While this does not so far seem to be leading to a proof
of Conjecture~\ref{TConj}, it is plausible that our methods at least extend to any (bounded-degree) tree with $o(\sqrt{n})$ leaves.


The proof of Theorem~\ref{T1}
requires two entirely different arguments, depending on
whether $k$ is large (at least about $\log n$) or small.
Here we treat small $k$.

\begin{theorem}\label{T3}
For each $\DDD$
there is a $K$
for which the following holds.
If $k<\DDD\log n$ divides $n$, and $v_1,\ldots,v_m$ are $m=n/k$ given (distinct) vertices, then
$\cG(n, K \frac{\log n}n)$ \aas\  contains $m$ disjoint $k$-vertex paths rooted at the $v_i$'s.
\end{theorem}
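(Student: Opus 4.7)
The plan is to build the paths one layer at a time. Set $L_1=\{v_1,\ldots,v_m\}$ and iteratively, for $j=2,\ldots,k$, choose a set $L_j\subseteq V\setminus\bigcup_{i<j}L_i$ of size $m$ together with a perfect matching $M_{j-1}$ between $L_{j-1}$ and $L_j$ in $G=\cG(n,K\log n/n)$; tracing the matchings $M_1,\ldots,M_{k-1}$ then yields the required $m$ disjoint $k$-vertex paths rooted at the $v_i$'s. To handle the dependencies between successive stages, write $G=G_1\cup G_2$ via sprinkling with $G_1\sim\cG(n,p_1)$, $G_2\sim\cG(n,p_2)$ independent, $p_1=(1-\eta)p$, $p_2=\eta p$ for a small constant $\eta>0$; use $G_1$ for the main layered construction and hold $G_2$ in reserve for the endgame.

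For each stage $j$, existence of $M_{j-1}$ follows via Hall's theorem from an expansion condition on the bipartite subgraph $G_1[L_{j-1},U_{j-1}]$, where $U_{j-1}=V\setminus\bigcup_{i<j}L_i$ has size $m(k-j+1)$. Each vertex of $L_{j-1}$ has expected degree $p_1|U_{j-1}|=(1-\eta)K(k-j+1)(\log n)/k$ in this bipartite graph, so standard concentration combined with Hall's theorem gives that $M_{j-1}$ exists \aas\ whenever $k-j+1\gtrsim k/K$. Combining with a union bound over $j$, and revealing $G_1$-edges in a carefully chosen order so that $L_{j-1}$ and $U_{j-1}$ can be treated as essentially independent of the still-unrevealed edges, handles all but the last $O(k/K)$ stages; in the extremal regime $k\asymp\log n$ these amount to $\Theta(\log n)$ stages.

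The main obstacle is this endgame: in the final stages the bipartite graph $G_1[L_{j-1},U_{j-1}]$ becomes too sparse, and \aas\ some endpoints in $L_{j-1}$ have no neighbors in $U_{j-1}$ at all. To handle this I would use an absorbing/rerouting argument powered by $G_2$. The rough idea is that already during the main phase one builds, at negligible cost, a collection of \emph{flexible} local gadgets---short subpath configurations in which a vertex can be swapped out along a $G_2$-edge---and then in the endgame uses these gadgets to reroute any stranded endpoint by swapping it into an occupied slot, thereby freeing a vertex that can instead be placed in $U_{j-1}$. Showing that enough gadgets survive the main phase to accommodate every stranded endpoint simultaneously, and that the required $G_2$-edges are present, is the heart of the proof and would occupy most of the technical work.
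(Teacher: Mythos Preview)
Your greedy layer-by-layer scheme runs into exactly the obstruction you flag, and the sketch you offer for it is not a proof. In the final stage $j=k$ you must find a perfect matching between two $m$-sets $L_{k-1}$ and $U_{k-1}$ in $\cG(n,p)$, where the relevant expected degree is $pm=K(\log n)/k$; when $k\asymp D\log n$ this is the constant $K/D$, so each side of that bipartite graph has $\Theta(m)$ isolated vertices and there is no perfect matching. Sprinkling with $G_2$ does not help directly, since the expected $G_2$-degree is also constant. Your absorbing idea would therefore have to reroute a \emph{constant fraction} of all vertices in the last stage alone (not the $o(m)$ leftovers that absorbers are typically designed for), and you give neither a construction of the gadgets, nor an argument that $\Theta(m)$ of them survive, nor that the required $G_2$-edges exist and can be used without collision. ``Would occupy most of the technical work'' is an acknowledgement that the heart of the proof is absent; as written this is a plan, not a proof, and it is far from clear the plan can be completed.

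The paper avoids the endgame altogether by a different decomposition: instead of building layers greedily, it fixes the \emph{entire} equipartition $M_0,M_1,\ldots,M_{k-1}$ at the outset---via an initial random assignment of each unused vertex to a layer, followed by a local repair step and a filling-in step---and only then verifies, for each $i$, that $G[M_{i-1},M_i]$ has a perfect matching. The point of the up-front random partition is that it can be \emph{engineered} so that every vertex of every $M_i$ has at least $T\approx mp/6$ neighbors (in $G_1\cup G_2$) in each adjacent layer: the random assignment gives this for all but $o(m)$ vertices per layer, and those exceptional vertices are patched using a second independent copy of the random graph. With that uniform minimum-degree guarantee in hand, Hall's theorem (together with a third independent copy to kill large violating sets) yields all $k-1$ matchings simultaneously. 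The key conceptual difference from your approach is that no layer is ``whatever is left over'': every $M_i$ is built with explicit control over its adjacencies to $M_{i-1}$ and $M_{i+1}$, so there is no endgame at all.
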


\nin
This is proved in Section~\ref{Proof}.
For the easy derivation of
Theorem~\ref{T1} (for small $k$), we may take
$G=G'\cup G''$, where $G'$ and $G''$ are independent copies of,
respectively,
$\cG(n, d/n)$ for a suitable constant $d$, and $\cG(n,p)$.
(So the $p$ in Theorem~\ref{T1} will be slightly larger
than the one in Theorem~\ref{T3}.)
Then $G'$ \aas\  contains a path $v_1\dots v_m$
(assuming, as we may, that $k>1$; see, e.g., \cite{RG}*{Chap. 8}),
which, according to Theorem~\ref{T3},
 we can (\aas) extend to a copy of $\comb_{n,k}$ using $G''$.

\section{Proof of Theorem~\ref{T3}}\label{Proof}

For a graph $H$ on $V$ and disjoint $A,B\sub V$,
we use the notation
$\nabla_H(A,B)=\{xy\in E(H):x\in A,y\in B\}$,
omitting the subscript when $H$ is the complete graph $K_V$.
As above, we write $G$ for $\cG(n,p)$.
Following common practice, we will sometimes pretend large numbers
are integers to avoid cluttering the discussion
with irrelevant floor and ceiling symbols.


Since Conjecture~\ref{TConj} is known to hold
when $T$ has $\gO(n)$ leaves, we may assume $k$ is
at least any given constant.
Though not really necessary, this
will save us a little trouble
in some places.
\remove{
and there
seems no point in taking extra pains to cover a range
in which is more or less trivial.}
Specifically we assume (as we may) that $\DDD>2$,
set
\beq{eps}
\eps = [\DDD(10+\log \DDD)]^{-1}\,,
\enq
and assume $k > 2/\eps$.

Set $C=600\eps^{-1}$.
With apologies, we now recycle, letting $p = C\frac{\log n}n$,
and take our random graph $G$ to be
the union of
three independent copies, say $G_1,G_2,G_3$, of $\cG(n,p)$.
It is enough to show that $G$ \aas\  contains
the desired paths from $v_1\dots v_m$
(thus giving Theorem~\ref{T3} with $K=3C$).

\medskip
Set $M_0=W_0=\{v_1\dots v_m\}$ and $R=V\sm M_0$.
It is of course enough to show

\begin{claim}\label{claim}
\Aas\ there is an equipartition
$M_1\cup\cdots\cup M_{k-1}$ of $R$
such that
\beq{pm}
\mbox{$G[M_{i-1},M_i]$ admits a perfect matching
for each $i\in [k-1]$\,,}
\enq
\end{claim}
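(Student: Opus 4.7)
\emph{Overall strategy.} My plan is to build the equipartition $M_1,\ldots,M_{k-1}$ iteratively, defining $M_i$ at step $i$ as the image in $R_i:=R\sm(M_1\cup\cdots\cup M_{i-1})$ of a perfect matching from $M_{i-1}$, produced via Hall's theorem. The three independent copies $G_1,G_2,G_3$ of $\cG(n,p)$ play distinct roles: $G_1$ carries the bulk of the iteration, while $G_2$ and $G_3$ supply extra randomness for the difficult endgame.

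\emph{Main iteration (easy layers).} First I would establish the usual pseudorandom properties of each $G_j$ \aas: every vertex has $G_j$-degree $(1\pm o(1))np=\Theta(\log n)$, and for every pair of disjoint $S,T\sub V$ of sufficient size, $|\nabla_{G_j}(S,T)|$ concentrates around $|S||T|p$. These give a Hall-type expansion that I would use to invoke Hall's theorem on $G_1[M_{i-1},R_i]$ at each step. The relevant Hall's condition holds as long as $|R_i|\cdot p\gg \log n$, i.e.\ $k-i\gg k/C$, because every $v\in M_{i-1}$ then has $\omega(1)$ neighbors in $R_i$ (no isolated vertices on the $M_{i-1}$-side), and larger subsets also expand comfortably by the concentration of $|\nabla(S,R_i)|$. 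This handles the first $k-1-i^*$ levels, for a threshold $i^*\asymp k/C$.

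\emph{The main obstacle (tail).} The hard part is the last $i^*\asymp k/C=\Theta(\log n)$ levels, where $|R_i|\cdot p=O(1)$: by a Poisson calculation a \emph{linear} number ($\Theta(m)=\Theta(n/\log n)$) of vertices in $M_{i-1}$ have no $G$-neighbor in $R_i$, even if all three copies are combined, because $mp_{\rm eff}\leq 3C/\gD$ is only a (large) constant. So no perfect matching can exist in $G[M_{i-1},R_i]$, and no naive random equipartition works. To overcome this, my plan is to depart from the pure greedy approach in the tail: halt the greedy phase at level $k-1-i^*$ (so $R_{k-i^*}$ has linear size $\Theta(n/\log\gD)$), and then simultaneously partition $R_{k-i^*}$ into the final $i^*$ layers via an adaptive construction. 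The key leverage is the freedom to place each $v\in R_{k-i^*}$ in whichever tail layer best matches its neighborhood structure in $G_1$, combined with the extra randomness of $G_2,G_3$: one would use $G_1$-edges to pick a rough layering, then use $G_2$-edges to ``swap'' defect vertices between layers via short augmenting paths, and finally use $G_3$-edges to clean up the last remaining defects. Making this precise --- controlling the number and location of defects at each stage, and showing that the swapping procedure terminates with all layers simultaneously admitting perfect matchings --- is the core technical task; it is where the careful choice of the constant $C$ in terms of $\gD$ and $\eps$ will be used.
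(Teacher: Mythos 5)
You correctly identify the central obstacle: when $k=\Theta(\log n)$, the last $\Theta(k/C)$ layers have $|R_i|p=O(1)$, so a constant fraction of $M_{i-1}$ is isolated from $R_i$ and the greedy matching must fail. But the fix you sketch --- ``pick a rough layering with $G_1$, swap defect vertices between layers with $G_2$-augmenting paths, clean up with $G_3$'' --- is not an argument; it is precisely where the proof lives. Among the difficulties not addressed: defects are coupled across layers (a bad vertex in $M_i$ constrains both the $M_{i-1}$--$M_i$ and $M_i$--$M_{i+1}$ matchings, and moving it elsewhere threatens two new matchings), and nothing is said about why a local ``swap'' procedure converges to a simultaneously valid assignment for all $\Theta(\log n)$ tail layers at once. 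The ``easy phase'' is also less routine than suggested: $M_{i-1}$ is an adaptively determined set, not a fixed or uniformly random $m$-subset, so verifying Hall expansion needs either a careful edge-exposure argument or a union bound over exponentially many candidate sets, neither of which is indicated.

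The paper's mechanism is quite different and does not split into an easy phase and a tail; it treats all $k-1$ layers symmetrically. One randomly pre-assigns a roughly $\alpha$-fraction of $R$ to sets $W_1\dots W_{k-1}$ (each vertex choosing each $W_i$ with probability $\alpha/k$), so the $W_i$ are random sets of size about $\alpha m$ and, crucially, the $G_1$-degrees of a vertex into distinct $W_i$'s are independent --- exactly the decoupling a greedy scheme cannot give. Then for each $x\in W_i$ with fewer than $T$ $G_1$-neighbors in some adjacent $W_j$ --- a low-probability defect, by a Chernoff bound --- one \emph{repairs} $x$ by adding $T$ fresh vertices from its $G_2$-neighborhood directly to $M_j$, and bars certain ``bad'' vertices from certain $M_j$'s. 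Filling in the remaining vertices by Hall's theorem (respecting bars) gives sets $M_i$ of size exactly $m$ in which every vertex has at least $T$ neighbors in each adjacent $M_j$; \eqref{pm} then follows by a union bound over Hall violators, using fresh $G_3$-edges for the large violators. The direct repair via $G_2$ --- giving each bad vertex its own batch of $T$ guaranteed neighbors, rather than swapping or augmenting --- is the idea your sketch is missing: it localizes the defects and makes the probabilistic dependencies manageable.
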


where, for disjoint $A,B\sub V$, $G[A,B]$ is the
bipartite graph on $A\cup B$ with edge set $\nabla_G(A,B)$.

\subsection{Algorithm}

Set
$T=\lfloor mp/6\rfloor$ and $c=mp/T$,
and note that $mp=np/k>C/D>6000$, so $T\ge 1000$.
In what follows we use $N^i(x)$ (respectively\ $N(x)$)
for neighborhood of $x$ in $G_i$
(resp.\ $G$).
We will show (in Section~\ref{Analysis}) that
the following procedure \aas\
produces a partition as in Claim~\ref{claim}.


\bn
{\bf First step:}
Let $\alpha\le 1$ be a constant to be specified later and
$Z=\{x\in R: |N^1(x)\cap W_0|<T\}$.
Let $W_1\dots W_{k-1}$ be disjoint random subsets of $R$
given by
$$
\P(x\in W_i) =\ga /k \left\{\begin{array}{ll}
\forall i\in [k-1] &\mbox{if $x\not\in Z$\,,}\\
\forall i\in \{2\dots k-1\} &\mbox{if $x\in Z$\,,}
\end{array}\right.
$$
these choices independent for different vertices $x$.
(Thus $\P(x\not\in \cup W_i)$ is $1-\ga$ or $1-\alpha(1-1/k)$,
as the case may be.)
Set $W=\cup_{i=0}^{k-1}W_i$.
The $W_i$'s are our initial installments on the $M_i$'s,
to be augmented in the next two steps.
(We won't bother with
formal notation for the evolving $M_i$'s.)

\medskip
It will be helpful to define $L(i)=\{i-1,i+1\}\cap \{0\dots k-1\}$ for
$0\leq i\leq k-1$.
For $i\in [k-1]$, set
\[B_i=\{x\in R\sm W:
\exists j\in L(i), ~|N^1(x)\cap W_j|<T\}\,;\]
these vertices will be barred from $M_i$.
(In particular $B_1\supseteq Z$.)

\mn
{\bf Repair phase}:
For $i\in \{0\dots k-1\}$ and $j\in L(i)$,
let
\[
X_{ij} =
\{x\in W_i:
|N^1(x)\cap W_j|<T\}\,.\]
(In particular $X_{10}=W_1\cap Z=\0$.)
We repair the $X_{ij}$'s in some arbitrary order.
{\em Repairing} $X_{ij}=\{x_1\dots x_s\}$
means that for $\rrr=1\dots s$ we
choose (again, arbitrarily) $T$ available vertices from
$N^2(x_\rrr)$ and add them to $M_j$,
where a vertex is
{\em un}available if it belongs to $B_j$ or
has already been assigned to one of
the $M_u$'s.
Note that the set of edges --- say, $E^2$ --- used in these
``repairs"
(i.e.\ edges from $x_r$ to the chosen vertices in $N^2(x_r)$) is a (star-)forest.

\mn
{\bf Filling in:}
Assign the
as yet unassigned vertices to the $M_i$'s so that
\beq{fill}
\mbox{for all $i$, $~|M_i|=m$ and $M_i\cap B_i=\0$\,.}
\enq

\medskip
The main point in all this is that,
since vertices of $B_i$ are barred from $M_i$ in the
repair and filling in phases, at the end of each of these phases,
we have
$|N_G(x)\cap W_j|\ge T$ for each $x\in M_i$ and
$j\in L(i)$.

\subsection{Analysis}\label{Analysis}

\medskip
We want to show that \aas\  (i) the above
procedure runs to completion and (ii) the $M_i$'s produced satisfy~\eqref{pm}.
(It may be worth observing that
$G_3$, which
plays no role in (i), is needed for (ii).)
Recalling that $\eps$ was specified in \eqref{eps}, set
\[
\mbox{$\ga=1/3$, $~\gc = (1-3\eps)\ga$,
$~\gb = \tfrac{(c\gc-1)^2}{4c\gc}\,,~$ and
$
~q=2e^{-\gb T}\,.$}
\]
\sugg{We will later use the easily verified
\beq{C}
C> \tfrac{c}{\gb}\max\{\tfrac{4}{\eps}+2D\log\tfrac{2e}{\eps},
D\log\tfrac{2C}{\eps Dc}\}\,.
\enq}

We first need some routine observations.

\begin{proposition}\label{devs}
The objects produced by the first step above \aas\  satisfy

\mn
{\rm (a)}  $|Z|\le \eps n$;

\mn
{\rm (b)}  $|W_i|\in (\gc m,(1+\eps)\ga m) ~~\forall i\in [k-1]$;

\mn
{\rm (c)}  $|B_i| <\eps n ~~\forall i\in [k-1]$;

\mn
{\rm (d)}  no vertex is in more than $\eps k$ of the $B_i$'s;

\mn
{\rm (e)}  $|X_{ij}| < 2mq+\log n$ $~\forall i\in \{0\dots k-1\}$
and $j\in L(i)$.
\end{proposition}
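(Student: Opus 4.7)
My approach for all five parts is to apply Chernoff-type concentration after conditioning on enough structure to recover (approximate) independence. Parts (a) and (b) are direct Chernoff applications. For~(a), the indicators $\one[x\in Z]$ for $x\in R$ depend on disjoint sets of $G_1$-edges (those from $x$ to $W_0$), hence are mutually independent; since $|N^1(x)\cap W_0|\sim\Bin(m,p)$ has mean $mp\ge 6T$, Chernoff pushes $\P(x\in Z)$ well below $\epsilon/2$, and another Chernoff on $|Z|$ yields~(a). Part~(b) is analogous: the coins defining the $W_j$'s are independent of $Z$, so conditional on $Z$ each $|W_i|$ is a sum of independent Bernoullis with expectation either $\alpha(n-m)/k\approx\alpha m$ or (for $i=1$) that shifted down by $\alpha|Z|/k$. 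With the bound from~(a) and the choice of $\alpha,\gamma$, the mean lies comfortably inside $(\gamma m,(1+\epsilon)\alpha m)$, and Chernoff plus a union bound over $i$ finishes.

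For~(c) (and nearly identically~(e)) I condition on~(a) and~(b). For fixed $x\in R$ and $j\in[k-1]$, the set $W_j$ is determined by coins independent of the $G_1$-edges at $x$, so $|N^1(x)\cap W_j|$ stochastically dominates $\Bin(|W_j|-1,p)$, whose mean is at least $c\gamma T$. The constant $\beta=(c\gamma-1)^2/(4c\gamma)$ is chosen precisely so that Chernoff yields $\P(|N^1(x)\cap W_j|<T)\le e^{-2\beta T}\le q/2$; a union bound over $j\in L(i)$ gives $\P(x\in B_i)\le q$. The key structural point is that, given the $W_j$'s, the events $\{x\in B_i\}$ for distinct $x\in R\setminus W$ depend on disjoint families of $G_1$-edges (those out of $x$) and are therefore independent, so $|B_i|$ is a sum of independent Bernoullis with mean $\le nq$, and Chernoff produces $|B_i|<\epsilon n$ with probability $1-\exp(-\Omega(n))$, easily enough to union-bound over $i\in[k-1]$. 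For~(e), the same argument restricted to $x\in W_i$ multiplies the mean by $\P(x\in W_i)\le\alpha/k$, giving $\E|X_{ij}|\le\alpha mq$, and Chernoff then delivers $|X_{ij}|\le 2mq+\log n$ \aas.

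The main obstacle is~(d), where the per-$x$ failure probability from a direct (c)-style argument is only about $q$, which is not small enough for an $n$-vertex union bound once $T$ (hence $q$) becomes a constant. The key reduction is that each $j$ with $|N^1(x)\cap W_j|<T$ puts $x$ into $B_i$ for at most the two values $i\in\{j-1,j+1\}\cap[k-1]$, so it suffices to show $J(x):=|\{j:|N^1(x)\cap W_j|<T\}|\le\epsilon k/2$ for every $x$. Conditioning on $N^1(x)\cap R$, the allocation of its vertices among $W_1,\ldots,W_{k-1}$ (together with an ``unassigned'' bin, and with $W_1$ barred for $Z$-vertices) is a balls-in-bins experiment driven by independent coin flips, under which the indicators $\one[|N^1(x)\cap W_j|<T]$ are negatively correlated. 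The Chernoff bound for sums of negatively correlated indicators, with mean $\mu\le kq$ and threshold $\epsilon k/2$, then gives a tail of shape $(2eq/\epsilon)^{\epsilon k/2}$. Substituting $q=2e^{-\beta T}$ and using $Tk=mpk/6=np/6=(C/6)\log n$, this collapses (up to polylogarithmic corrections) to $n^{-\beta C\epsilon/12}$; the choice $C=600/\epsilon$ makes this exponent exceed $1$, so the per-$x$ failure probability is $o(1/n)$, and a final union bound over $x\in V$ closes~(d).
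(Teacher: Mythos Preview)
Your plan is essentially correct and follows the same Chernoff--plus--conditioning strategy as the paper, but two points deserve comment.

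For part~(d), your reduction to bounding $J(x)=|\{j\in[k-1]:|N^1(x)\cap W_j|<T\}|$ is exactly the right move. However, conditioning on $N^1(x)\cap R$ and invoking negative association in the resulting balls-in-bins allocation is more machinery than needed. The paper instead conditions on $Z$ and the $W_j$'s (this uses only edges to $W_0$); under that conditioning the events $E_j=\{|N^1(x)\cap W_j|<T\}$ for $j\in[k-1]$ are genuinely \emph{independent}, since they involve disjoint edge sets from $x$ into the pairwise disjoint $W_j\subseteq R$. A plain union bound over $\lceil\epsilon k/2\rceil$-subsets of the $E_j$'s then suffices---no NA required. (As literally written, the paper asserts independence of the events $\{x\in B_i\}$ over $i$, which is not quite true since $B_i$ and $B_{i+2}$ share dependence on $W_{i+1}$; the clean argument goes through the independent $E_j$'s, just as your reduction does.) Your NA route works too, but note it tacitly needs $|N^1(x)\cap R|$ close to its mean to justify $\P(E_j)\le q$; this is an easy extra Chernoff step that should be stated.

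For part~(e) you overlook a genuine subtlety. When $(i,j)=(0,1)$ the construction forces $W_1\cap Z=\emptyset$, i.e.\ we have conditioned on $\{|N^1(y)\cap W_0|\ge T\ \forall\,y\in W_1\}$, an increasing event in the $W_0$--$W_1$ edges. This destroys the independence your ``same argument'' relies on: for $x\in W_0$ the variables $|N^1(x)\cap W_1|$ across different $x$ are no longer independent Bernoulli sums. The paper handles this with Harris' inequality---the conditioning event is increasing while $\{|X_{01}|\ge t\}$ is decreasing, so the conditional probability can only drop---and your plan needs this (or an equivalent patch) to close the case $(i,j)=(0,1)$.
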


\nin
Of course
(c) contains (a), but we state (a) first since it's needed for (b),
which in turn is needed for (c).

Note that the events in Proposition~\ref{devs}
depend only on $G_1$ and the
$W_i$'s.  In fact it will be helpful to conserve some of this information:
for $x\in V$ and $i\in \{0\dots k-1\}$, let
$\gz(i,x)$ be the indicator of the event $\{|N^1(x)\cap W_i|\geq T\}$.
Then $\{x\in Z\}= \{\gz(0,x)=0\}$ ($x\in R$)
and, once we have the $W_i$'s, the remaining assertions in
the proposition are functions of the $\gz(i,x)$'s.

\medskip
There is nothing delicate about Proposition~\ref{devs},
and we aim for
simple rather than optimal arithmetic.
The following
Bernstein/Chernoff-type bound
(for which see e.g. \cite{Beck-Chen}*{Lemma 8.2}) will be sufficient
for our large deviation purposes.  (We use $B(m,\rho)$ for a r.v.
with the binomial distribution $\Bin(m,\rho)$.)
\begin{lemma}\label{Bernstein}
For any m, $\rho$ and $t>0$,
\[\left.
\begin{array}{r}
\P(B(m,\rho)>m\rho+t)\\
\P(B(m,\rho)<m\rho-t)
\end{array}
\right\}<
\exp[-\tfrac{1}{4}\min\{t,
t^2/m\rho \}]\,.\]

\end{lemma}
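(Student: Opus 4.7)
The plan is to derive the bound from the standard Chernoff / moment-generating-function method and then reduce the resulting transcendental estimate to the clean two-regime form in the statement.

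Write $B=X_1+\cdots+X_m$ with $X_i$ i.i.d.\ Bernoulli$(\rho)$, and set $\mu=m\rho$. For the upper tail I would apply Markov's inequality to $e^{\lambda B}$ (with $\lambda>0$), use the elementary estimate $1+\rho(e^\lambda-1)\le \exp(\rho(e^\lambda-1))$ on each of the $m$ independent factors, and optimize in $\lambda$. The optimum is $\lambda=\log(1+t/\mu)$, yielding
\[
\P(B>\mu+t)\;\le\; \exp\bigl(-\mu\, h(t/\mu)\bigr), \qquad h(x):=(1+x)\log(1+x)-x.
\]
The analogous calculation with $e^{-\lambda B}$ for the lower tail produces the strictly stronger Gaussian-type bound $\P(B<\mu-t)\le \exp(-t^2/(2\mu))$ when $0<t\le \mu$ (with the probability being $0$ for $t>\mu$), which already dominates the right-hand side required by the lemma; so only the upper tail needs further work.

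The main remaining task is the elementary inequality
\[
\mu\, h(t/\mu)\;\ge\; \tfrac14 \min\{t,\, t^2/\mu\},
\]
which I would verify in the two regimes $t\le\mu$ and $t>\mu$. In the first regime, writing $x=t/\mu\in[0,1]$, the bound $h(x)\ge x^2/4$ follows from $h(0)=h'(0)=0$ and $h''(x)=1/(1+x)\ge 1/2$ on $[0,1]$ by integrating twice. In the second regime ($x>1$), we need $h(x)\ge x/4$; this holds at $x=1$ since $h(1)=2\log 2-1>\tfrac14$, and $h'(x)=\log(1+x)\ge \log 2>\tfrac14$ for $x\ge 1$ extends it to all $x>1$ by monotonicity.

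I do not anticipate any serious obstacle, as the estimate is entirely standard. The only point requiring a modicum of care is the piecewise comparison with $\tfrac14\min\{x,x^2\}$, whose constant $\tfrac14$ is tuned precisely to interpolate uniformly between the sub-Gaussian regime $t\ll\mu$ and the sub-exponential (``Poisson'') regime $t\gg\mu$; any smaller constant would make the two-regime comparison cleaner but would not help the applications that follow.
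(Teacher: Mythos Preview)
Your argument is correct: the Chernoff bound $\P(B>\mu+t)\le\exp(-\mu h(t/\mu))$ with $h(x)=(1+x)\log(1+x)-x$, the lower-tail bound $\exp(-t^2/(2\mu))$, and the piecewise verification $h(x)\ge\tfrac14\min\{x,x^2\}$ all go through as you describe. The paper, however, gives no proof of this lemma at all; it simply quotes it as a standard Bernstein/Chernoff-type inequality and refers the reader to \cite{Beck-Chen}*{Lemma 8.2}. So your proposal supplies a self-contained derivation where the paper is content to cite, which is perfectly fine (and arguably more informative), but there is no ``paper's own proof'' to compare against.
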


\begin{proof}[\textbf{\em Proof of Proposition~\ref{devs}}]
(a)
For $x\in R$, we have, using Lemma~\ref{Bernstein},
\begin{eqnarray*}
\P(x\in Z) & = & \P(B( m,p)<T)\\
&=& \P(B(m,p)<mp -(c -1)T)\\
&<& \exp[-\tfrac{(c-1)^2}{4c}T] < q.
\end{eqnarray*}
Thus, writing ``$\succ$" for stochastic domination,
we have $|Z|\prec B(n,q)$,
whence, using Lemma~\ref{Bernstein} and $\eps>2q$,
$\P(|Z| > \eps n) < \exp[-(\eps-q)n/4]$.

\mn
(b)
Given $Z$ satisfying (a) we have, for each $i$,
$|W_i|\sim \Bin(n_i,\ga/k)$, where
$
n_1= n-m-|Z|
$
and $n_i=n-m$ if $i\geq 2$.
In particular (for each $i$),
$n_i\in ((1-2\eps)n,n) $
(note $m<\eps n$ because of our lower bound on $k$),
and
\begin{eqnarray*}
\P\big(|W_i|\not\in (\gc m, (1+\eps) \ga m)\big) &\le &
\P\big(|W_i|\not\in ((1-\epsilon)\alpha n_i/k,(1+\epsilon)\alpha n_i/k)\big) \\
&<& 2\exp[-\eps^2\ga m/4].
\end{eqnarray*}

\nin
(c) and (d).
Condition on values of $Z$ and the $W_i$'s satisfying (a) and (b) --- note
this uses the values $\gz(0,x)$ ($x\in R$) but no other
information from $G_1$ --- and write $\P'$ for the corresponding
conditional probabilities.
(We may of course think of exposing just the edges of $G_1$ incident
with $W_0$ to determine $\P'$.)

For $x\in R\sm W$ and $i\in [k-1]$,
again using Lemma~\ref{Bernstein},
we have
\begin{eqnarray}\label{Bi}
\mbox{$\P'(x\in B_i)$} & < & 2\P(B(\gc m,p)<T)\nonumber\\
&=& 2\P(B(\gc m,p)<\gc mp -(\gc c -1)T) ~< ~q,
\end{eqnarray}
unless $i=1$ and $x\in Z$, in which case $x$ is automatically in $B_1$.
(If $i=1$ and $x\not\in Z$, the 2's in \eqref{Bi} are unnecessary.)

Using \eqref{Bi} and independence of the events $\{x\in B_i\}$
($x\in R\sm W, i\in [k-1]$),
we have
(i)
$|B_1\sm Z|,|B_2|\dots |B_{k-1}|\prec
B(n,q)$, so that (c) holds with probability at least
$1-k\exp[-(\eps-q)n/4]$,
and (ii)  for any $x\in R\sm W$,
\begin{eqnarray*}
\P(|\{i\geq 2:x\in B_i\}|\geq \lceil\eps k\rceil-1 )&<&
\Cc{k}{\lceil\eps k\rceil-1}q^{\eps k-1} \\
&<& (e/\eps)^{\eps k}\exp[- \tfrac{\gb C\eps}{2c}\log n]\\
&<&  \exp[(\DDD \log\tfrac{e}{\eps}-\tfrac{\gb C}{2c})\eps \log n]
=o(1/n).
\end{eqnarray*}
Here we used
$\binom{k}{r}\le(ek/r)^r \le(e /\epsilon  )^{\epsilon k}$,
the latter valid for $r\le \epsilon k$;
$\eps k-1> \eps k/2$; $T=mp/c= C\log n/(ck)$;
$k<\DDD\log n$; and, for the $o(1/n)$,
the easily verified $\gb C/(2c)-\DDD \log(e/\eps)>2/\eps$.)


\mn
(e)
We retain the conditioning and notation $\P'$ of (c).
We assume first that $(i,j)\neq (0,1)$ (and, since $X_{10}=\0$,
may also assume $(i,j)\neq (1,0)$).
For $x\in W_i$ we have, as in \eqref{Bi},
\beq{Xij}
\mbox{$\P'(x\in X_{ij})$} < \P(B(\gc m,p)<T) < q\,,
\enq
whence
$|X_{ij}|\prec B(m,q)$ and (again using Lemma~\ref{Bernstein})
\[\P'(|X_{ij}|\geq 2mq +\log n)<\exp[-(mq+\log n)/4]<n^{-1/4}=o(1/k)\,.\]

For $(i,j)=(0,1)$ the preceding argument is not quite applicable,
since conditioning on
$A:=\{W_1\cap Z=\0\}=\{\gz(0,x)=1 ~\forall x\in W_1\}$
introduces dependencies among the edges joining
$W_0$ and $W_1$.
But since $A$ is an increasing event,
Harris' Inequality \cite{Harris}
says that this conditioning does not increase the
probability of the decreasing event $\{|X_{01}|\geq 2mq+\log n\}$;
so the argument in the preceding paragraph does imply
$\P'(|X_{01}|\geq 2 mq+\log n)=o(1/k)$.
(Of course this detail could also be dealt with by simply
choosing additional random edges between
$W_0$ and $W_1$.)
\end{proof}

\bigskip
Write $Q$ for the intersection of the
events in (a)--(e), and
$S$ for the event that our
process does not get stuck --- that is, there {\em are}
$T$ available vertices whenever the repair phase requires them
and there {\em is} a way to complete the $M_i$'s in the
filling in phase --- and the $M_i$'s it produces satisfy
\eqref{pm}.
We have
\[\P(\ov{S})\leq \P(\ov{Q})+\P(\ov{S}\mid Q) =o(1) +\P(\ov{S}\mid Q)\,,\]
so just need $\P(\ov{S}\mid Q)=o(1)$.

\medskip
The first part of $S$ --- that the process doesn't get stuck --- is easy.
First, given $Q$, the number of
available vertices at any repair step (at $x$ say) is at least
\beq{nmW}
n-m -(|W| + \max_i |B_i|+ T\sum |X_{ij}| ) >n/2\,.
\enq
To see this notice that,
since there are at most $2k$ terms in the sum, we
may bound
the third term in brackets using (e) and
\[Tq=2Te^{-\gb T}\leq 2\tfrac{C}{\DDD c}\exp[-\tfrac{C\gb}{\DDD c}]< \eps
\]
(say).  Here the first inequality is gotten by noting that
$xe^{-\gb x}$ is decreasing on $x>1/\gb$ and that
$T=mp/c \geq C/(\DDD c)$.  The second may be rewritten as
\[ \tfrac{1200}{  c}\exp[-\tfrac{600\gb}{\eps \DDD c}]< \eps^2 D\,,\]
which, since
$c\geq 6$ and $600\gb/c > 5$ (say), follows from the easily verified
\[
200 \exp [-5(10+\log D)] < D^{-1}(10+\log D)^{-2}.
\]
We conclude that the probability that
$x$ has fewer than $T$ available neighbors
in $G_2$ is at most
$
\P(B(n/2,p)<T)
=o(1/n),
$
so that the  repair phase \aas\ finishes successfully.

Second, to say that the filling in step \aas\
finishes successfully, it's enough to show
that $Q$ implies the existence
of an
assignment of $M_i$'s satisfying \eqref{fill}.
This is a standard type of application of Hall's
Theorem, briefly as follows.
For $i\in [k-1]$, write $W_i^*$ for the set of vertices
assigned to $M_i$ through the end of the repair phase,
and set $W^*=\cup W_i^*$, $r_i= m -|W_i^*|$ and
$r=\sum r_i= |R\sm W^*|$.
A set of $M_i$'s with \eqref{fill} is equivalent to a perfect matching
in the bipartite graph $\gG$ on the vertex set
$\{v_{ij}:i\in [k-1], j\in [r_i]\}\cup (R\sm W^*)$
with $v_{ij}\sim x$ iff $x\not\in B_i$.
Then: the common size of the two sides of the bipartition is
$r \in (n/2,n)$ (see \eqref{nmW} for the lower bound);
for degrees in $\gG$
we have
$d(v_{ij}) = |R\sm (W^*\cup B_i)| > n/2>r/2$ (again see \eqref{nmW})
and, using (d),
$
d(x) = r-\sum\{r_i:x \in B_i\} > r-\eps k m >r/2;
$
and it follows easily from Hall's Theorem that a bipartite graph
with $r$ vertices in each part of the bipartition and all
degrees at least $r/2$ admits a perfect matching.

\medskip
We are left with the more interesting part of $S$,
the assertion that \eqref{pm} holds \aas\  given $Q$.
Say $A$ is a {\em violator of type} $(i,j,a)$ if
$A\sub M_i$, $|A|=a$, and $|N_j(A)|<a$,
where $N_j(A)= N(A)\cap M_j$ (and $N(A)=\cup_{x\in A}N(x)$).
By Hall's Theorem it is enough to show the following (given $Q$).

\begin{claim}\label{Claim2}
\Aas\ there is no violator of type $(i,j,a)$ for any
$ a\in \{1\dots \lceil m/2\rceil\}$,
$i\in \{0\dots k-1\}$ and $j\in L(i)$
\end{claim}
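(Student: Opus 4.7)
The plan is a Hall-type argument by union bound over potential violators, exploiting the fact that $G_3$ has been untouched by the algorithm. Condition on $Q$ and on the realized partition $M_0,\ldots,M_{k-1}$ (a function of $G_1$, $G_2$, and the Step-1 randomization only), so that $G_3$ retains its unconditioned $\cG(n,p)$ distribution. A violator of type $(i,j,a)$ forces the existence of sets $A\subseteq M_i$ of size $a$ and $B\subseteq M_j$ of size $m-a+1$ with $\nabla_G(A,B)=\emptyset$; in particular $\nabla_{G_3}(A,B)=\emptyset$, an event of conditional probability at most $(1-p)^{a(m-a+1)}\leq e^{-pa(m-a+1)}$.

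I would first dispose of small $a$ deterministically, using the algorithmic min-degree bound. Since every $x\in M_i$ satisfies $|N_G(x)\cap W_j|\geq T$ for $j\in L(i)$ and $W_j\subseteq M_j$, we have $|N_j(A)|\geq T\geq a$ for all nonempty $A\sub M_i$ whenever $a\leq T$, so no violator exists in this range. For larger $a$ the plan is the $G_3$-based union bound
\[
\P(\exists \text{ violator of size } a)\ \leq\ 2k\binom{m}{a}\binom{m}{a-1}(1-p)^{a(m-a+1)}\ \leq\ 2k\Bigl(\frac{em}{a}\Bigr)^{2a}e^{-pam/2},
\]
summed over $a\in(T,\lceil m/2\rceil]$. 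The explicit choices $\eps=[\DDD(10+\log\DDD)]^{-1}$ and $C=600/\eps$ are designed to make $pm$ a large (though $\DDD$-dependent) constant, so that $pam/2$ comfortably dominates the entropy term $2a\log(em/a)$ whenever $a$ is not too small.

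The hard part is the intermediate range, $a$ just above $T$, especially when $k$ is close to $\DDD\log n$: in that regime $T$ and $pm$ are both merely constants while $\log(em/a)$ can still be as large as $\log n$, so the naive $G_3$ union bound is inadequate. To close the gap I would sharpen the small-$a$ treatment with a co-degree argument inside $G_1$: because $np^2=o(1)$, a routine Chernoff/union bound shows that with high probability every pair of vertices has only $O(1)$ common neighbors in $W_j$, whereupon inclusion-exclusion gives $|N_{G_1}(A)\cap W_j|\geq aT-O(a^2)$, pushing the deterministic range up to roughly $a\leq T/\text{const}$. The bookkeeping step is then to verify that the range handled by this refined degree bound and the range where the $G_3$ union bound closes overlap, which is where the explicit constants $600/\eps$ and $\eps=[\DDD(10+\log\DDD)]^{-1}$ earn their keep.
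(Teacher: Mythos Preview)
Your identification of the intermediate range as ``the hard part'' is exactly right, but the proposed fix does not close the gap. When $k$ is close to $\DDD\log n$, both $T=\lfloor mp/6\rfloor$ and $mp$ are constants (of order $C/\DDD$), while $m\asymp n/\log n$. Your deterministic minimum-degree bound handles $a\le T$, and your $G_3$ union bound needs $2\log(em/a)<pm/2$, which only kicks in once $a$ exceeds a fixed small fraction of $m$ --- this is exactly the paper's threshold $\vt m$ with $\vt=(ce)^{-2}$. The co-degree refinement gives $|N_j(A)|\ge aT-O(a^2)$, but this is $\ge a$ only for $a=O(T)$, i.e.\ still a constant; it does not reach anywhere near $\vt m$. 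So there remains a range from a constant up to a constant fraction of $m$ that neither argument covers.

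The paper's approach for $a\le \vt m$ is different in kind from a degree/co-degree argument. The point is that a violator $A$ with $|N_j(A)|<a$ forces all the (at least $T$) $M_j$-neighbours of each $x\in A$ into a common $a$-set $B\sub M_j$; since the repair edges $E^2$ form a forest, this means $|E^1\cap\nabla(A,B)|>a(T-2)$ edges of $G_1$ sit inside an $a\times a$ block. One then argues (Observation~2.4 and the surrounding discussion) that, conditioned on the algorithm's output, these $G_1$-edges are still essentially independent $\mathrm{Bernoulli}(p)$ up to a harmless $(1-q)^{-2a}$ correction, yielding $\P(Q_a(A,B))\le(1-q)^{-2a}\binom{a^2}{a(T-2)}p^{a(T-2)}$. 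Summed over $\binom{m}{a}^2$ pairs this gives a bound of the shape $[(ce)^T(a/m)^{T-4}]^a$, which is small throughout $1\le a\le\vt m$. The crucial ingredient you are missing is this ``too many $G_1$-edges in a small box'' anomaly, together with the conditional-independence bookkeeping that makes it usable after the algorithm has run.
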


\nin
(since if $A$ is a violator of type $(i,j,a)$ for some $a>\lceil m/2\rceil$,
then $M_j\sm N_j(A)$ contains a violator of type $(j,i,\lceil m/2\rceil)$).

\begin{proof}
Fix $i,j$ as in the claim and set $\vt=(ce)^{-2}$.
We consider the cases $a\leq\vt m$ and $a>\vt m$ separately,
beginning with the former.

\mn

Let $E^1$ be the set of edges of $G_1$
that meet $W$, and recall
$E^2$ is the set of edges of $G_2$ that are actually
used in the repair phase.
%
If $A$ is a violator of type
$(i,j,a)$,
then there is some $a$-subset $B$ of $M_j$
containing $N_j(A)$.
(We could, of course, require $|B|<a$.)
The algorithm arranges that
each vertex of $M_i$ is joined by
$E^1\cup E^2$ to at least $T$ vertices of $M_j$
(we actually make no use of $(E(G_1)\sm (E^1))\cup (E(G_2)\sm E^2)$),
whence
\[
|(E^1\cup E^2)\cap\nabla(A,B)| =
|(E^1\cup E^2)\cap\nabla(A,M_j)| \geq aT\,,\]
while (since $E^2$ is a forest)
$|E^2\cap\nabla(A,B)| <2a$; so
\[
|E^1\cap\nabla(A,B)| > a(T-2)\,.
\]
Thus the probability of a violator of type $(i,j,a)$
is at most
\beq{smallbd}
\sum_{A,B}
\P(Q_a(A,B))\,,
\enq
where
$
Q_a(A,B) $ is the event
$\{A\sub M_i,B\sub M_j,|E^1\cap\nabla(A,B)|\geq a(T-2)\}$
if $|A|=|B|=a$
and $Q_a(A,B)=\0$ otherwise,
and the sum is over $A,B\sub V$.
(Of course
if $\{i,j\}\neq\{0,1\}$ then the only nonzero summands are
those with $A,B$ disjoint $a$-subsets of $R$, and, for example,
when $(i,j)=(0,1)$ we are only interested in pairs with $A\sub W_0$
and $B\sub R$ (and $|A|=|B|=a$).)
Note that, summing only over {\em a-subsets} $A,B$ of $V$, we have
\beq{sumAB}
\sum_{A,B} \P(A\sub M_i,B\sub M_j) =
\C{m}{a}^2
\enq
(since the r.v. $\sum_{A,B}{\bf 1}_{\{A\sub M_i,B\sub M_j\}}$ is
actually the constant $\C{m}{a}^2$;
of course by symmetry the summand in \eqref{sumAB}
is the same for all
$(A,B)$ of interest, but we don't need this).

On the other hand, we will show (provided the conditioning event
is not vacuous)
\beq{PrQ}
\P(Q_a(A,B)\mid A\sub M_i,B\sub M_j)
<(1-q)^{-2a}\C{a^2}{a(T-2)}p^{a(T-2)}\,.
\enq
Given this we just need a little arithmetic:
the combination of \eqref{sumAB} and \eqref{PrQ} yields
\begin{eqnarray}\label{sumAB'}
\sum_{A,B}\P(Q_a(A,B))&\leq&
\C{m}{a}^2
(1-q)^{-2a}\C{a^2}{a(T-2)}p^{a(T-2)}\nonumber\\
&\leq &
\left[ (1-q)^{-2}\left(\frac{em}{a}\right)^2
\left(\frac{eap}{T-2}\right)^{T-2}\right]^a\nonumber\\
&=&
\left[ \left(\frac{e}{1-q}\right)^2\left(\frac{a}{m}\right)^{T-4}
\left(\frac{emp}{T-2}\right)^{T-2}\right]^a\nonumber\\
&<&
\left[ (ce)^T\left(\frac{a}{m}\right)^{T-4}\right]^a
\end{eqnarray}

\nin
(say), which easily implies
\beq{smallbd2}
\sum_{a=1}^{\lfloor\vt m\rfloor}\sum_{A,B}\P(Q_a(A,B)) <O(1/m)^{T-4}\,.
\enq

\mn
It remains to prove~\eqref{PrQ}.
Here it is helpful to
think of our procedure as choosing

\mn
(i)
$\gz(0,x)$ for $x\in R$, thus specifying $Z$;

\mn
(ii)  $W_1\dots W_{k-1}$;

\mn
(iii)
$\gz(i,x)$
for $i\in [k-1]$ and $x\in V\sm W=:Y$, thus specifying the $B_i$'s

\mn
(and then continuing).
It is then evident that the only information from
$E(G_1)$ with any bearing on
our choices of the sets $W_l$ and $M_l\sm W_l$ is that in (i) and (iii);
in particular, we have the following.

\begin{observation}\label{obs}
The pair $(M_i,M_j)$, set
$E^1\cap\nabla(W_i\cup W_j, Y)$
and indicators ${\bf 1}_{\{xy\in E^1\}}$ for $(x,y)\in W_i\times W_j$
are conditionally (mutually) independent
given
$W_i,W_j$ and  the values of $\gz(i,x)$ and $\gz(j,x)$
for $x\in Y$.
\end{observation}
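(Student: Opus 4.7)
The plan is bookkeeping: track how each of the three items depends on the Bernoulli$(p)$ indicators comprising $E(G_1)$, and verify that after the stated conditioning they draw on conditionally independent pieces. Conditional on $W_i,W_j$, partition $E(G_1)$ into three disjoint---hence independent---families: (I) edges in $\nabla(W_i\cup W_j,Y)$; (II) edges with both endpoints in $W_i\cup W_j$; and (III) the remaining edges. Item~(2) of the observation is a sub-family of (I), item~(3) is a sub-family of (II), and conditioning on $\gz(i,x),\gz(j,x)$ for $x\in Y$ restricts only (I), so (II) and (III) remain independent Bernoulli products.

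The core step is to show that $(M_i,M_j)$ is a function of the conditioning, of group~(III), of $G_2$, and of the fresh randomness used in step~1. Running through the algorithm: the sets $W_l$ arise from $Z$ (a function of $\gz(0,\cdot)|_R$, which lives in (I)/(III)) and fresh coin flips; the $B_l$'s use $\gz(l,\cdot)|_Y$, which is part of the conditioning when $l\in\{i,j\}$ and lies in~(III) otherwise; each $X_{lm}\sub W_l$ uses $\gz(m,\cdot)|_{W_l}$, which lies in~(III) unless $\{l,m\}\sub\{i,j\}$; and the vertices added to $M_m$ when repairing $X_{lm}$ are drawn from $N^2(\cdot)\sub G_2$, independent of $G_1$. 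The filling-in step is a deterministic set-theoretic assignment constrained only by the $B_l$'s and the already-assigned vertices.

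The main obstacle is the case $\{l,m\}\sub\{i,j\}$, in which $X_{lm}$ genuinely depends on group~(II) indicators and so could a priori correlate $(M_i,M_j)$ with item~(3). I would address this by enlarging the conditioning to include $\gz(i,\cdot)|_{W_j}$ and $\gz(j,\cdot)|_{W_i}$; since these values are coordinate projections of the independent Bernoulli product on group~(II), the residual (II)-indicators---which still contain all of item~(3)---remain an independent Bernoulli product, while $(M_i,M_j)$ becomes measurable with respect to the enlarged data alone. Because these extra $\gz$-values are conditionally independent of items~(2) and~(3) given the original conditioning (being functions of a disjoint slice of the group~(II) product), the mutual independence claimed by the observation then follows by integrating them out.
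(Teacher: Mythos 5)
Your bookkeeping identifies the right danger spot, but the patch you propose does not actually work. You condition additionally on $\gz(i,\cdot)|_{W_j}$ and $\gz(j,\cdot)|_{W_i}$ and assert that ``these values are coordinate projections of the independent Bernoulli product on group~(II), [so] the residual (II)-indicators \dots remain an independent Bernoulli product.'' This is false: for $x\in W_i$, $\gz(j,x)=\one\{|N^1(x)\cap W_j|\ge T\}$ is a \emph{threshold of a sum} of the group-(II) Bernoullis $\{\one_{\{xy\in E^1\}}:y\in W_j\}$, not a single coordinate of that product. Conditioning on such an aggregate event destroys the product structure: given $\{|N^1(x)\cap W_j|\ge T\}$, the edge indicators from $x$ into $W_j$ are no longer independent (and their marginals are no longer $p$). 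So after your enlarged conditioning, the item-(3) indicators are \emph{not} a Bernoulli product, and the claimed mutual independence does not follow ``by integrating out.''

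For comparison, the paper's own justification of the Observation is a one-line assertion: the algorithm's choices of $W_l$ and $M_l\setminus W_l$ ``evidently'' use only $\gz(0,\cdot)|_R$ and $\gz(l,\cdot)|_Y$, together with fresh randomness and $G_2$. The paper does not enlarge the conditioning and does not attempt the reduction you propose. Note also that the paper handles exactly this kind of aggregate-conditioning issue elsewhere --- but differently: for $x\in A\setminus W_i\subseteq Y$ the conditioning $\gz(j,x)=1$ is absorbed by paying a factor $(1-q)^{-1}$ per vertex (since the conditioning event has probability $>1-q$), rather than by pretending the edges remain an unbiased product. If you want to condition on the extra $\gz$-values to make $(M_i,M_j)$ measurable, you would need to pay a similar multiplicative price on the item-(3) side, not invoke independence; your argument as written asserts independence where only a bounded distortion is available.

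A secondary (smaller) slip: you write that $X_{lm}\sub W_l$ ``uses $\gz(m,\cdot)|_{W_l}$, which lies in~(III) unless $\{l,m\}\sub\{i,j\}$.'' For general $l\notin\{i,j\}$ and $m\in\{i,j\}$, the relevant $\gz(m,x)$ with $x\in W_l$ is determined by edges in $\nabla(W_l,W_m)$; these touch $W_i\cup W_j$ but have their other endpoint in $W\setminus(W_i\cup W_j)$, not in $Y$, so they sit in neither of your groups (I) nor (III) and your three-way partition should be refined before the measurability claim for $(M_i,M_j)$ is made. This is repairable, but the threshold/aggregate issue above is the genuine obstruction.
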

\medskip
Suppose now that we're given $W_i,W_j,M_i,M_j$ with $A\sub M_i$
and $B\sub M_j$. For a set $X$ we use $B(X,p)$ for the distribution on
the power set of $X$ that assigns $U\sub X$ probability
$p^{|U|}(1-p)^{|X\sm U|}$.

We assume first that we are not in one of the
slightly special cases with $\{i,j\}=\{0,1\}$.
According to
Observation~\ref{obs}, the sets
$E^1\cap \nabla(x,W_i)$ and $E^1\cap\nabla(x,W_j)$ ($x\in Y$)
and the indicators ${\bf 1}_{\{xy\in E^1\}}$ ($x\in W_i,y\in W_j$)
are mutually independent.
Each of the indicators is Bernoulli with mean $p$;
each
$E^1\cap \nabla(x,W_i)$ is distributed as
$\FF:=B(\nabla(x,W_i),p)$
conditioned on $\{|\FF|\geq T\}$,
an event of probability
at least $\P(\Bin(\gc m,p)\geq T)> 1-q $
(see Proposition~\ref{devs}(b) and \eqref{Bi});
and similarly for the $E^1\cap \nabla(x,W_j)$'s. Thus we can bound the probability of any event determined by
$E^1\cap\nabla(A,B)$, by computing its probability assuming
all edges occur independently with probability $p$, and then multiplying
by $(1-q)^{- |A\setminus W_i|+ |B\setminus W_j|}\le  (1-q)^{-2a}$.
This gives \eqref{PrQ}:
\begin{eqnarray*}
\P(Q_a(A,B)\mid A\sub M_i,B\sub M_j)&<&
(1-q)^{-2a}\P(\Bin(a^2,p)\geq a(T-2))\\
&<& (1-q)^{-2a}\C{a^2}{\lfloor a(T-2)\rfloor}p^{a(T-2)}.
\end{eqnarray*}
%

\remove{
\begin{observation}\label{obs}
$(M_i,M_j)$ and
$E^1\cap\nabla(W_i\cup W_j, Y)$
are conditionally independent given
$W_i$, $W_j$ and the values of $\gz(i,x)$ and $\gz(j,x)$
for $x\in Y$.
\end{observation}
\medskip
Suppose now that we're given $W_i,W_j,M_i,M_j$ with $A\sub M_i$
and $B\sub M_j$.
For a set $X$ we use $B(X,p)$ for the distribution on
$\{\mbox{subsets of $X$}\}$ that assigns $U\sub X$ probability
$p^{|U|}(1-p)^{|X\sm U|}$.
We assume first that we are not in one of the
slightly special cases with $\{i,j\}=\{0,1\}$.

According to
Observation~\ref{obs}, the sets
$E^1\cap \nabla(x,W_i)$ and $E^1\cap\nabla(x,W_j)$ ($x\in Y$)
and the indicators ${\bf 1}_{\{xy\in E^1\}}$ ($x\in W_i,y\in W_j$)
are mutually independent.
Each of the indicators is Bernoulli with mean $p$;
each
$E^1\cap \nabla(x,W_i)$ is distributed as
$\FF:=B(\nabla(x,W_i),p)$
conditioned on $\{|\FF|\geq T\}$,
an event of probability
at least $\P(\Bin(\gc m,p)\geq T)> 1-q $
(see Proposition~\ref{devs}(b) and \eqref{Bi});
and similarly for the $E^1\cap \nabla(x,W_j)$'s.
Combining these observations we have \eqref{PrQ}:
\begin{eqnarray*}
\P(Q_a(A,B)\mid A\sub M_i,B\sub M_j)&<&
(1-q)^{-2a}\P(\Bin(a^2,p)\geq a(T-2))\\
&<& (1-q)^{-2a}\C{a^2}{\lfloor a(T-2)\rfloor}p^{a(T-2)}.
\end{eqnarray*}
(It may be helpful to point out the little giveaways here:
we could replace $2a$ by $|A\sm W_i|+|B\sm W_j|$
and $a^2$ by $a^2-|A\sm W_i||B\sm W_j|$.)
} 

When
$\{i,j\}=\{0,1\}$, $Q_a(A,B)$ is determined by the sets
$E^1\cap \nabla(x,W_0)$,
for $x\in A$ if $i=1$ and $x\in B$ if $i=0$.
Recalling that the choice of $M_1$ depends only on $\gz(0,x)$ for $x\in R$, these sets are independent, each distributed as
$\FF:=B(\nabla(x,W_0),p)$
conditioned on $\{|\FF|\geq T\}$,
an event of probability
at least $\P(B(m,p)\geq T)> 1-q $,
and \eqref{PrQ} follows as before.
(In this case $(1-q)^{-2a}$ could be replaced by $(1-q)^{-a}$.)\end{proof}

\bigskip
For the simpler
analysis when $a>\vt m $ (and $a\leq \lceil m/2\rceil$), we just use $G_3$.
Here
a  violator $A$ of type $(i,j,a)$ satisfies
$\nabla(A,B)=\0$ for some $B\sub M_j$ of size $\lceil m/2\rceil$;
so the probability
of such a violator is less than
\[
\sum_{A,B} \P(A\sub M_i,B\sub M_j,
E(G_3)\cap \nabla(A,B)=\0) <4^m (1-p)^{a m/2} = o(1)\,,
\]
where
the sum is over disjoint
$A,B\sub V$
(but really, for example, over
$A,B\sub R$ unless $\{i,j\}=\{0,1\}$)
with $|A|=a$ and $|B|=\lceil m/2\rceil$, and
we used
\[
\sum_{A,B} \P(A\sub M_i,B\sub M_j) =
\C{m}{a}\C{m}{\lceil m/2\rceil}\,,
\]
\[
\P(E(G_3)\cap \nabla(A,B)=\0\mid A\sub M_i,B\sub M_j)\leq
(1-p)^{a m/2}
\]
(of course here $G_3$ is actually independent of the conditioning),
and (recalling $k<D\log n$)
$\vt mp/2 \geq \vt C/(2\DDD)> 2\log 4$.
\qed

\begin{bibdiv}
\begin{biblist}

\bib{AKS}{article}{
   author={Alon, Noga},
   author={Krivelevich, Michael},
   author={Sudakov, Benny},
   title={Embedding nearly-spanning bounded degree trees},
   journal={Combinatorica},
   volume={27},
   date={2007},
   number={6},
   pages={629--644},
}

\bib{AS}{book}{
  author={Alon, Noga},
  author={Spencer, Joel H.},
  title={The probabilistic method},
  edition={3},
  publisher={John Wiley \& Sons Inc.},
  date={2008},
  pages={xviii+352},
}

\bib{Beck-Chen}{book}{
   author={Beck, J{\'o}zsef},
   author={Chen, William W. L.},
   title={Irregularities of distribution},
   series={Cambridge Tracts in Mathematics},
   volume={89},
   publisher={Cambridge University Press},
   place={Cambridge},
   date={1987},
   pages={xiv+294},
}

\bib{BW}{article}{
   author={Bender, E. A.},
   author={Wormald, N. C.},
   title={Random trees in random graphs},
   journal={Proc. Amer. Math. Soc.},
   volume={103},
   date={1988},
   number={1},
   pages={314--320},
}

\bib{RG}{book}{
   author={Bollob{\'a}s, B{\'e}la},
   title={Random graphs},
   series={Cambridge Studies in Advanced Mathematics},
   volume={73},
   edition={2},
   publisher={Cambridge University Press},
   place={Cambridge},
   date={2001},
   pages={xviii+498},
}

\bib{Boll}{article}{
   author={Bollob{\'a}s, B{\'e}la},
   title={The evolution of sparse graphs},
   conference={
      title={Graph theory and combinatorics},
      address={Cambridge},
      date={1983},
   },
   book={
      publisher={Academic Press},
      place={London},
   },
   date={1984},
   pages={35--57},
}

\bib{ER}{article}{
   author={Erd{\H{o}}s, P.},
   author={R{\'e}nyi, A.},
   title={On the evolution of random graphs},
   journal={Publ. Math. Inst. Hungar. Acad. Sci.},
   volume={5},
   date={1960},
   pages={17--61},
}

\bib{Harris}{article}{
   author={Harris, T. E.},
   title={A lower bound for the critical probability in a certain
   percolation process},
   journal={Proc. Cambridge Philos. Soc.},
   volume={56},
   date={1960},
   pages={13--20},
}

\bib{HKS}{article}{
   author={Hefetz, Dan},
   author={Krivelevich, Michael},
   author={Szab{\'o}, Tibor},
   title={Sharp threshold for the appearance of certain spanning trees in
   random graphs},
   journal={Random Structures Algorithms},
   volume={41},
   date={2012},
   number={4},
   pages={391--412},
}

\bib{JLR}{book}{
   author={Janson, Svante},
   author={{\L}uczak, Tomasz},
   author={Rucinski, Andrzej},
   title={Random graphs},
   series={Wiley-Interscience Series in Discrete Mathematics and
   Optimization},
   publisher={Wiley-Interscience, New York},
   date={2000},
   pages={xii+333},
}

\bib{KK}{article}{
   author={Kahn, Jeff},
   author={Kalai, Gil},
   title={Thresholds and expectation thresholds},
   journal={Combin. Probab. Comput.},
   volume={16},
   date={2007},
   number={3},
   pages={495--502},
}

\bib{KLW2}{article}{
   author={Kahn, Jeff},
   author={Lubetzky, Eyal},
   author={Wormald, Nick},
   title={Cycle factors and renewal theory},
   status={preprint},
}

\bib{KS}{article}{
   author={Koml{\'o}s, J{\'a}nos},
   author={Szemer{\'e}di, Endre},
   title={Limit distribution for the existence of Hamiltonian cycles in a
   random graph},
   journal={Discrete Math.},
   volume={43},
   date={1983},
   number={1},
   pages={55--63},
}

\bib{Kriv}{article}{
   author={Krivelevich, Michael},
   title={Embedding spanning trees in random graphs},
   journal={SIAM J. Discrete Math.},
   volume={24},
   date={2010},
   number={4},
   pages={1495--1500},
}

\bib{Wormald}{article}{
   author={Wormald, N. C.},
   title={Models of random regular graphs},
   conference={
      title={Surveys in combinatorics, 1999 (Canterbury)},
   },
   book={
      series={London Math. Soc. Lecture Note Ser.},
      volume={267},
      publisher={Cambridge Univ. Press},
      place={Cambridge},
   },
   date={1999},
   pages={239--298},
}

\end{biblist}
\end{bibdiv}

\end{document}